\newcounter{minutes}\setcounter{minutes}{\time}
\newcounter{hours}\setcounter{hours}{\time}
\title{Tur\'an type inequalities for Struve functions}
\author[\'A. Baricz]{\'Arp\'ad Baricz}
\address{Department of Economics,  Babe\c{s}-Bolyai University, Cluj-Napoca 400591, Romania}
\address{Institute of Applied Mathematics, John von Neumann Faculty of Informatics, \'Obuda University, 1034 Budapest, Hungary}
\email{bariczocsi@yahoo.com}
\author[S. Ponnusamy]{Saminathan Ponnusamy}
\address{Indian Statistical Institute, Chennai Centre, Society for Electronic Transactions and Security,
MGR Knowledge City, CIT Campus, Taramani, Chennai 600113, India}
\email{samy@iitm.ac.in}
\author[S. Singh]{Sanjeev Singh}
\address{Department of Mathematics,
Indian Institute of Technology Madras, Chennai 600036, India}
\email{sanjeevsinghiitm@gmail.com}
\newtheorem{theorem}{Theorem}
\newtheorem{lemma}{Lemma}
\begin{document}

\def\thefootnote{}
\footnotetext{ \texttt{File:~\jobname .tex,
          printed: \number\year-0\number\month-0\number\day,
          \thehours.\ifnum\theminutes<10{0}\fi\theminutes}
} \makeatletter\def\thefootnote{\@arabic\c@footnote}\makeatother

\keywords{Struve functions, zeros of Struve functions, Tur\'an type inequalities, infinite product representation,
Bessel functions, Mittag-Leffler expansion, completely monotonic functions, log-convex functions.}

\subjclass[2010]{39B62, 33C10, 42A05.}

\maketitle


\begin{abstract}
Some Tur\'an type inequalities for Struve functions of the first kind are deduced by using various methods developed
in the case of Bessel functions of the first and second kind. New formulas, like Mittag-Leffler expansion,
infinite product representation for Struve functions of the first kind, are obtained, which may be of independent interest.
Moreover, some complete monotonicity results and functional inequalities are deduced for Struve functions of the second kind.
These results complement naturally the known results for a particular case of Lommel functions of the
first kind, and for modified Struve functions of the first and second kind.
\end{abstract}

\section{\bf Tur\'an type inequalities for Struve functions of the first kind}
\setcounter{equation}{0}

Let us start with a well-known relation between Bessel functions of the first kind $J_{\nu}$ and
Struve functions of the first kind $\mathbf{H}_{\nu}.$ Namely, for all $n\in\{0,1,\dots\}$ and
$x\in\mathbb{R}$ we have \cite[p. 291]{nist}
$$\mathbf{H}_{-n-\frac{1}{2}}(x)=(-1)^nJ_{n+\frac{1}{2}}(x).
$$
Now, let us recall the Tur\'an type inequality for Bessel functions of the first kind, that is,
\begin{equation}\label{turan1}
J_{\nu}^2(x)-J_{\nu-1}(x)J_{\nu+1}(x)\geq 0,
\end{equation}
where $x\in\mathbb{R}$ and $\nu>-1.$ Combining the above relation with \eqref{turan1} we obtain the
Tur\'an type inequality
\begin{equation}\label{turan2}
\mathbf{H}_{\nu}^2(x)-\mathbf{H}_{\nu-1}(x)\mathbf{H}_{\nu+1}(x)\geq0,
\end{equation}
which holds for all $x\in\mathbb{R}$ and $\nu\in\left\{-\frac{1}{2},-\frac{3}{2},\dots\right\}.$
Moreover, since for $x\in\mathbb{R}$ and $\nu\geq 0$ the Tur\'an type inequality \eqref{turan1} can be improved as
$$J_{\nu}^2(x)-J_{\nu-1}(x)J_{\nu+1}(x)\geq\frac{1}{\nu+1}J_{\nu}^2(x),
$$
it follows that the inequality \eqref{turan2} can be improved too as
\begin{equation}\label{turan4}
\mathbf{H}_{\nu}^2(x)-\mathbf{H}_{\nu-1}(x)\mathbf{H}_{\nu+1}(x)\geq\frac{1}{1-\nu}\mathbf{H}_{\nu}^2(x),
\end{equation}
which holds for all $x\in\mathbb{R}$ and $\nu\in\left\{-\frac{1}{2},-\frac{3}{2},\dots\right\}.$
For more details on the above Tur\'an type inequalities for Bessel functions of the first kind
we refer to the papers \cite{skov,szasz1,szasz2,thiru} and also to the survey paper \cite{bp1}.
Taking into account the above inequalities it is natural to ask whether the Tur\'an type
inequalities \eqref{turan2} and/or \eqref{turan4} hold true for other values of $\nu.$
In this paper we will concentrate on this problem and we present some interesting results concerning
Tur\'an type inequalities for Struve functions of the first and second kind. As we can see below the
analysis of Struve functions is somewhat more complicated than that of Bessel functions, however, its nature
is quite similar for some values of $\nu.$ This section is devoted to Tur\'an type inequalities for
Struve functions of the first kind, while the next section contains some results, like Tur\'an type
inequalities and complete monotonicity results on Struve functions of the second kind. Before we present
the main results of this section we first show some preliminary results which will be used in the sequel
and which may be of independent interest. Since Struve functions are frequently used in many places in
physics and applied mathematics, we believe that our results may be useful for other scientists
interested in Struve functions. We also note that the analogous results for modified Struve functions
of the first and second kind were already deduced by Baricz and Pog\'any \cite{bp2,bp3} by using the
techniques developed in the case of modified Bessel functions of the first and second kind.
Moreover, the results presented in this section complement naturally the known results for a particular
case of Lommel functions of the first kind, obtained recently by Baricz and Koumandos \cite{bk}.

The next result is analogous to the well-known result for Bessel functions of the first kind.

\begin{lemma}
If $|\nu|\leq \frac{1}{2},$ then the Hadamard factorization of the real entire function
$\mathcal{H}_{\nu}:\mathbb{R}\to(-\infty,1],$ defined by
$\mathcal{H}_{\nu}(x)=\sqrt{\pi}2^{\nu}x^{-\nu-1}\Gamma\left(\nu+\frac{3}{2}\right)\mathbf{H}_{\nu}(x),$ reads as follows
\begin{equation}\label{product}
\mathcal{H}_{\nu}(x)=\prod_{n\geq 1}\left(1-\frac{x^2}{h_{\nu,n}^2}\right),
\end{equation}
where $h_{\nu,n}$ stands for the $n$th positive zero of the Struve function $\mathbf{H}_{\nu}.$
The above infinite product is absolutely convergent and if $|\nu|\leq \frac{1}{2}$ and
$x\neq h_{\nu,n},$ $n\in\{1,2,\dots\},$ then the Mittag-Leffler expansion of the Struve function
$\mathbf{H}_{\nu}$ is as follows
\begin{equation}\label{mittag}
\frac{\mathbf{H}_{\nu-1}(x)}{\mathbf{H}_{\nu}(x)}=\frac{2\nu+1}{x}+\sum_{n\geq1}\frac{2x}{x^2-h_{\nu,n}^2}.
\end{equation}
\end{lemma}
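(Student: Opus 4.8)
The plan is to apply Hadamard's factorization theorem to the normalized function $\mathcal{H}_{\nu}$ and then read off the Mittag-Leffler expansion by logarithmic differentiation. First I would record that $\mathcal{H}_{\nu}$ is a real, even entire function normalized so that $\mathcal{H}_{\nu}(0)=1$: writing out the power series of $\mathbf{H}_{\nu}$ and multiplying by $\sqrt{\pi}2^{\nu}x^{-\nu-1}\Gamma(\nu+\tfrac32)$ cancels the factor $x^{\nu+1}$ and leaves a power series in $x^{2}$ with constant term $1$. From the coefficients $a_n \asymp 1/\bigl(4^{n}\,\Gamma(n+\tfrac32)\Gamma(n+\nu+\tfrac32)\bigr)$ and Stirling's formula one finds $-\log|a_n|\sim 2n\log n$, so the standard growth formula $\rho=\limsup_k k\log k/(-\log|c_k|)$ shows that $\mathcal{H}_{\nu}$ has order $\rho=1$.

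The crucial input, and the reason for the restriction $|\nu|\le\tfrac12$, is that in this range $\mathbf{H}_{\nu}$ possesses only real zeros, so the zeros of $\mathcal{H}_{\nu}$ are exactly $\{\pm h_{\nu,n}\}_{n\ge1}$. Unlike Bessel functions, whose zeros are real for every $\nu>-1$, Struve functions acquire complex zeros outside a bounded $\nu$-range, and this is precisely where the analysis becomes more delicate; I would invoke the known reality result here rather than reprove it. Using the asymptotics $h_{\nu,n}\sim n\pi$, the series $\sum h_{\nu,n}^{-2}$ converges while $\sum h_{\nu,n}^{-1}$ diverges, so the exponent of convergence of the zeros equals $1$ and the genus is $1$. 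Hadamard's theorem then yields
\begin{equation*}
\mathcal{H}_{\nu}(x)=e^{\alpha+\beta x}\prod_{n\ge1}(1-x/h_{\nu,n})e^{x/h_{\nu,n}}(1+x/h_{\nu,n})e^{-x/h_{\nu,n}},
\end{equation*}
and pairing the factor at $+h_{\nu,n}$ with the one at $-h_{\nu,n}$ cancels the exponentials and produces $1-x^{2}/h_{\nu,n}^{2}$. Since $\mathcal{H}_{\nu}$ is even the surviving factor $e^{\alpha+\beta x}$ must be even, forcing $\beta=0$, and $\mathcal{H}_{\nu}(0)=1$ forces $\alpha=0$; this gives \eqref{product}, and $\sum h_{\nu,n}^{-2}<\infty$ guarantees the absolute convergence of the product.

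For the Mittag-Leffler expansion I would take the logarithmic derivative of \eqref{product}, obtaining $\mathcal{H}_{\nu}'(x)/\mathcal{H}_{\nu}(x)=\sum_{n\ge1}2x/(x^{2}-h_{\nu,n}^{2})$. On the other hand, from $\mathcal{H}_{\nu}(x)=\sqrt{\pi}2^{\nu}x^{-\nu-1}\Gamma(\nu+\tfrac32)\mathbf{H}_{\nu}(x)$ one has $\mathcal{H}_{\nu}'/\mathcal{H}_{\nu}=-(\nu+1)/x+\mathbf{H}_{\nu}'/\mathbf{H}_{\nu}$, and the recurrence $\mathbf{H}_{\nu}'(x)=\mathbf{H}_{\nu-1}(x)-(\nu/x)\mathbf{H}_{\nu}(x)$ (obtained by adding the two standard contiguous relations, whose inhomogeneous $x^{\nu}$-terms cancel) gives $\mathbf{H}_{\nu}'/\mathbf{H}_{\nu}=\mathbf{H}_{\nu-1}/\mathbf{H}_{\nu}-\nu/x$. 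Combining these three identities yields $\mathbf{H}_{\nu-1}(x)/\mathbf{H}_{\nu}(x)=(2\nu+1)/x+\sum_{n\ge1}2x/(x^{2}-h_{\nu,n}^{2})$, which is \eqref{mittag}. The main obstacle is therefore not the factorization mechanics but securing the reality of all the zeros on $|\nu|\le\tfrac12$; once that is granted, the remainder is order-and-genus bookkeeping together with a single application of the Struve recurrence.
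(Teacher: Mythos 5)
Your proposal is correct and follows essentially the same path as the paper: power series for $\mathcal{H}_{\nu}$, a coefficient-based growth estimate, Steinig's reality of the zeros for $|\nu|\leq\frac{1}{2}$ (the paper settles the endpoint cases $\nu=\pm\frac{1}{2}$ separately via the elementary closed forms $\sin x$ and $1-\cos x$), Hadamard's theorem, and then logarithmic differentiation combined with the recurrence $\mathbf{H}_{\nu-1}(x)=\frac{\nu}{x}\mathbf{H}_{\nu}(x)+\mathbf{H}_{\nu}'(x)$. The only divergence is bookkeeping: you treat $\mathcal{H}_{\nu}$ as an order-$1$, genus-$1$ function of $x$ and must cancel the Weierstrass factors $e^{\pm x/h_{\nu,n}}$ and the term $e^{\beta x}$ by pairing symmetric zeros and invoking evenness, whereas the paper's coefficient computation (indexed by $n$, i.e., effectively in the variable $x^{2}$) yields order $\frac{1}{2}<1$, hence genus $0$ and a product with no exponential factors to begin with --- the two order values are consistent, since an even entire function of order $\rho$ in $x$ has order $\rho/2$ as a function of $x^{2}$, so neither computation contradicts the other.
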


\begin{proof}[\bf Proof]
By using the power series expansion of the Struve function $\mathbf{H}_{\nu}$ \cite[p. 288]{nist}
$$\mathbf{H}_{\nu}(x)=\left(\frac{x}{2}\right)^{\nu+1}\sum_{n\geq0}\frac{(-1)^n\left(\frac{x}{2}\right)^{2n}}
{\Gamma\left(n+\frac{3}{2}\right)\Gamma\left(n+\nu+\frac{3}{2}\right)}
$$
we obtain that
$$\mathcal{H}_{\nu}(x)=\frac{\sqrt{\pi}}{2}\sum_{n\geq0}\frac{(-1)^n\Gamma\left(\nu+\frac{3}{2}\right)x^{2n}}
{2^{2n}\Gamma\left(n+\frac{3}{2}\right)\Gamma\left(n+\nu+\frac{3}{2}\right)}.
$$
Taking into consideration the well-known limits
$$
\lim_{n\to \infty} \frac{\log \Gamma(n+c)}{n\,\log n}=1,\quad
\lim_{n\to \infty} \frac{[\Gamma(n+c)]^{1/n}}{n}=\frac{1}{e},
$$
where $c$ is a positive constant, and \cite[p. 6, Theorems 2 and 3]{lev}, we infer that the entire
function $\mathcal{H}_{\nu}$ is of growth order $\rho=\frac{1}{2}$ and of exponential type $\sigma=0$. Namely,
for $\nu>-\frac{3}{2}$ we have
$$\rho=\lim_{n\to\infty}\frac{n\log n}{2n\log2 +\log\Gamma\left(n+\frac{3}{2}\right)+\log\Gamma\left(n+\nu+\frac{3}{2}\right)}
=\frac{1}{2}
$$
and
$$\sigma=\frac{1}{\rho e}\lim_{n\to\infty}\frac{n}{\sqrt[n]{2^{2n}\Gamma\left(n+\frac{3}{2}\right)\Gamma\left(n+\nu+\frac{3}{2}\right)}}=0.
$$
Now, recall that according to Steinig \cite[p. 367]{steinig} if $|\nu|<\frac{1}{2},$ then all zeros
$h_{\nu,n}$ of the Struve function $\mathbf{H}_{\nu}$ are real and simple. Moreover, since \cite[p. 291]{nist}
$$\mathbf{H}_{-\frac{1}{2}}(x)=\sqrt{\frac{2}{\pi x}}\sin x, \ \ \ \mathbf{H}_{\frac{1}{2}}(x)=\sqrt{\frac{2}{\pi x}}(1-\cos x),
$$
it is clear that all zeros of $\mathbf{H}_{-\frac{1}{2}}$ and $\mathbf{H}_{\frac{1}{2}}$ are real and simple.
With this the rest of the proof of \eqref{product} follows by applying Hadamard's Theorem \cite[p. 26]{lev}.
Now, since the infinite product in \eqref{product} is absolutely convergent, by taking the logarithm of both
sides of \eqref{product} and then differentiating we obtain
\begin{equation}\label{logder}
\frac{x\mathbf{H}_{\nu}'(x)}{\mathbf{H}_{\nu}(x)}=\nu+1+\sum_{n\geq 1}\frac{2x^2}{x^2-h_{\nu,n}^2},
\end{equation}
where $|\nu|\leq\frac{1}{2}$ and $x\neq h_{\nu,n},$ $n\in\{1,2,\dots\}.$ The rest of the
proof of \eqref{mittag} follows from \eqref{logder} and
\begin{equation}\label{rec1}
\mathbf{H}_{\nu-1}(x)=\frac{\nu}{x}\mathbf{H}_{\nu}(x)+\mathbf{H}_{\nu}'(x)
\end{equation}
which is obtained from the relations \cite[p. 292]{nist}
\begin{equation}\label{rec2}
\mathbf{H}_{\nu-1}(x)+\mathbf{H}_{\nu+1}(x)=\frac{2\nu}{x}\mathbf{H}_{\nu}(x)+
\frac{\left(\frac{x}{2}\right)^{\nu}}{\sqrt{\pi}\Gamma\left(\nu+\frac{3}{2}\right)}
\end{equation}
and
\begin{equation}\label{rec3}
\mathbf{H}_{\nu-1}(x)-\mathbf{H}_{\nu+1}(x)=2\mathbf{H}_{\nu}'(x)-
\frac{\left(\frac{x}{2}\right)^{\nu}}{\sqrt{\pi}\Gamma\left(\nu+\frac{3}{2}\right)}.
\end{equation}
\end{proof}

Now, we are ready to state our main result of this section.

\begin{theorem}\label{th1}
The following assertions are valid:
\begin{enumerate}
\item[\bf a.] If $\nu\in\left[-\frac{3}{2},-\frac{1}{2}\right]$ and $x\in\mathbb{R},$ then the Tur\'an
type inequality \eqref{turan2} holds true.
\item[\bf b.] If $|\nu|\leq\frac{1}{2}$ and $|x|\leq h_{\nu,1},$ then the Tur\'an type inequality \eqref{turan2} holds true.
\item[\bf c.] If $\nu\in\left[-\frac{3}{2},-\frac{1}{2}\right]$ and $x\in\mathbb{R},$ then
\begin{equation}\label{turanlag}
\mathbf{H}_{\nu}^2(x)-\mathbf{H}_{\nu-1}(x)\mathbf{H}_{\nu+1}(x)\geq\frac{1}{x}\mathbf{H}_{\nu}(x)\mathbf{H}_{\nu+1}(x).
\end{equation}
Moreover, if $\nu\in\left(-\frac{3}{2},-\frac{1}{2}\right]$ and $|x|< h_{\nu+1,1},$ then the next Tur\'an type inequality holds
\begin{equation}\label{turannew}
\mathbf{H}_{\nu}^2(x)-\mathbf{H}_{\nu-1}(x)\mathbf{H}_{\nu+1}(x)\geq\frac{1}{2\nu+3}\mathbf{H}_{\nu}^2(x).
\end{equation}
\item[\bf d.] If $\nu\geq \frac{3}{2}$ and $|x|\leq \pi,$ then the Tur\'an type inequality \eqref{turan2} is valid.
\item[\bf e.] If $\nu>\frac{3}{2}$ and $|x|<\pi,$ then the counterpart of the Tur\'an type inequality \eqref{turan2} is as follows:
\begin{equation}\label{T2}
\mathbf{H}_{\nu}^2(x)-\mathbf{H}_{\nu-1}(x)\mathbf{H}_{\nu+1}(x)\leq\frac{1}{\nu+\frac{1}{2}}\mathbf{H}_{\nu}^2(x).
\end{equation}
\end{enumerate}
\end{theorem}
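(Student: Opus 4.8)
The plan is to reduce all five assertions to two ``master identities'' and then read off the sign either from the Mittag--Leffler expansion \eqref{mittag} or from a Poisson-type integral. Write $\Phi_\nu=\mathbf{H}_{\nu-1}/\mathbf{H}_\nu$, so that \eqref{mittag} reads $\Phi_\nu(x)=\frac{2\nu+1}{x}+\sum_{n\ge1}\frac{2x}{x^2-h_{\nu,n}^2}$, and abbreviate $\beta_\nu(x)=(x/2)^{\nu}/(\sqrt\pi\,\Gamma(\nu+\tfrac32))$ for the forcing term of \eqref{rec2}--\eqref{rec3}. Subtracting \eqref{rec3} from \eqref{rec2} gives the companion of \eqref{rec1},
\[
\mathbf{H}_{\nu+1}=\tfrac{\nu}{x}\mathbf{H}_\nu-\mathbf{H}_\nu'+\beta_\nu .
\]
First I would use \eqref{rec1} twice to express $\mathbf{H}_\nu/\mathbf{H}_{\nu+1}=\Phi_{\nu+1}$ and $\mathbf{H}_{\nu-1}/\mathbf{H}_{\nu+1}=\Phi_{\nu+1}'+\Phi_{\nu+1}^2-\tfrac1x\Phi_{\nu+1}$; this collapses the determinant, for $\nu\in[-\tfrac32,-\tfrac12]$ (so that \eqref{mittag} applies to $\mathbf{H}_{\nu+1}$), to
\[
\frac{\mathbf{H}_\nu^2-\mathbf{H}_{\nu-1}\mathbf{H}_{\nu+1}}{\mathbf{H}_{\nu+1}^2}
=\frac1x\Phi_{\nu+1}-\Phi_{\nu+1}'
=\frac{2(2\nu+3)}{x^2}+\sum_{n\ge1}\frac{4x^2}{(x^2-h_{\nu+1,n}^2)^2}.
\]
A parallel computation (differentiating $\Phi_\nu$ and simplifying via \eqref{rec1} and its companion to the Riccati relation $\Phi_\nu'=\frac{2\nu-1}{x}\Phi_\nu-1-\Phi_\nu^2+\beta_{\nu-1}/\mathbf{H}_\nu$, then inserting \eqref{mittag}) gives, for $|\nu|\le\tfrac12$, the $\nu$-centred identity
\[
\frac{\mathbf{H}_\nu^2-\mathbf{H}_{\nu-1}\mathbf{H}_{\nu+1}}{\mathbf{H}_\nu^2}
=\sum_{n\ge1}\frac{4h_{\nu,n}^2}{(x^2-h_{\nu,n}^2)^2}
+\frac{\beta_{\nu-1}\mathbf{H}_\nu-\beta_\nu\mathbf{H}_{\nu-1}}{\mathbf{H}_\nu^2}.
\]

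Parts \textbf{a}, \textbf{b} and the first inequality of \textbf{c} then follow by inspection. For \textbf{a} the right-hand side of the first identity is a sum of nonnegative terms (as $2\nu+3\ge0$), so \eqref{turan2} holds for all real $x$. For \eqref{turanlag} I would subtract $\tfrac1x\mathbf{H}_\nu\mathbf{H}_{\nu+1}=\tfrac1x\Phi_{\nu+1}\mathbf{H}_{\nu+1}^2$, reducing the claim to $-\mathbf{H}_{\nu+1}^2\Phi_{\nu+1}'\ge0$, and $\Phi_{\nu+1}'\le0$ is immediate from \eqref{mittag} once $2\nu+3\ge0$. For \textbf{b} I use the $\nu$-centred identity: on $0<|x|<h_{\nu,1}$ every $x^2-h_{\nu,n}^2<0$, so \eqref{mittag} gives $\Phi_\nu\le(2\nu+1)/x$, which (since $\beta_{\nu-1}/\beta_\nu=(2\nu+1)/x$) is exactly $\beta_{\nu-1}\mathbf{H}_\nu\ge\beta_\nu\mathbf{H}_{\nu-1}$; both summands are then nonnegative, and the endpoint $|x|=h_{\nu,1}$ follows by continuity.

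The refined inequality \eqref{turannew} is the first genuinely hard point. Dividing the first identity by $\mathbf{H}_{\nu+1}^2$, using $\mathbf{H}_\nu^2=\Phi_{\nu+1}^2\mathbf{H}_{\nu+1}^2$ and writing $\alpha=2\nu+3>0$, $T_n=(h_{\nu+1,n}^2-x^2)^{-1}>0$ for $|x|<h_{\nu+1,1}$, the claim becomes
\[
\frac{\alpha}{x^2}+4\sum_{n\ge1}T_n+4x^2\sum_{n\ge1}T_n^2\ \ge\ \frac{4x^2}{\alpha}\Big(\sum_{n\ge1}T_n\Big)^2 .
\]
The first three terms are positive, so only the cross terms of $(\sum_nT_n)^2$ must be absorbed; the natural device is the Cauchy--Schwarz-type bound $(\sum_nT_n)^2\le\alpha\sum_nT_n^2$ (the sum rule $\sum_n h_{\nu+1,n}^{-2}=\tfrac1{3(2\nu+5)}$ read off from \eqref{product} is useful here). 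I expect the delicate regime to be $x\to h_{\nu+1,1}$, where $T_1\to\infty$ and the leading $T_1^2$ coefficient $4x^2-4x^2/\alpha$ makes the whole argument hinge on the size of $\alpha$.

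Finally \textbf{d} and \textbf{e} I would treat together through the Poisson representation $\mathbf{H}_\nu(x)=A_\nu\int_0^1(1-t^2)^{\nu-1/2}\sin(xt)\,dt$ with $A_\nu=2(x/2)^\nu/(\sqrt\pi\,\Gamma(\nu+\tfrac12))$, valid for all three indices $\nu-1,\nu,\nu+1$ when $\nu>\tfrac12$; by symmetry take $x\ge0$. For $0\le x\le\pi$ one has $\sin(xt)\ge0$ on $[0,1]$, so $d\mu(t)=\sin(xt)\,dt$ is a nonnegative measure, and with $I_s=\int_0^1(1-t^2)^s\,d\mu(t)$ a short $\Gamma$-computation (note $A_\nu^2/(A_{\nu-1}A_{\nu+1})=\frac{\nu+1/2}{\nu-1/2}$) turns \eqref{T2} and \eqref{turan2} into the single chain
\[
I_{\nu-1/2}^2\ \le\ I_{\nu-3/2}\,I_{\nu+1/2}\ \le\ \frac{\nu+\frac12}{\nu-\frac12}\,I_{\nu-1/2}^2 .
\]
The left inequality is log-convexity of $s\mapsto I_s$ (Cauchy--Schwarz) and delivers \eqref{T2} at once (part \textbf{e}). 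The right inequality is a reverse Cauchy--Schwarz with sharp constant $\tfrac{\nu+1/2}{\nu-1/2}$ yielding \eqref{turan2} (part \textbf{d}); proving it is the second main obstacle. I would study the moment ratio $I_{\nu-3/2}I_{\nu+1/2}/I_{\nu-1/2}^2$ as a function of $x\in[0,\pi]$, noting that at $x=0$ it equals $\tfrac{(\nu+1/2)^2}{(\nu-1/2)(\nu+3/2)}$, already strictly below the target, and using the stronger hypothesis $\nu\ge\tfrac32$ to propagate the bound up to $x=\pi$.
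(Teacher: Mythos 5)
Your identities are all correct, and for parts \textbf{a}, \textbf{b}, the first inequality of \textbf{c}, and part \textbf{e} you have essentially reproduced the paper's own proof: your first master identity is precisely the paper's part-\textbf{a} computation (the paper derives $\mathbf{\Delta}_{\nu-1}/\mathbf{H}_{\nu}^2=\frac1x\Phi_{\nu}-\Phi_{\nu}'$ from \eqref{rec1} and \eqref{mittag} and then shifts $\nu\mapsto\nu+1$), your ``$\nu$-centred identity'' is a rearrangement of the paper's identity \eqref{delta} obtained from the Struve differential equation, and your Cauchy--Schwarz chain $I_{\nu-1/2}^2\le I_{\nu-3/2}I_{\nu+1/2}$ is the paper's log-convexity of $\nu\mapsto\mathbb{H}_{\nu}(x)$ for $x\in(0,\pi)$ in equivalent form. (Minor omission: at the zeros $x=h_{\nu+1,n}$ the identities degenerate; the paper checks $\mathbf{\Delta}_{\nu}(h_{\nu+1,n})=\mathbf{H}_{\nu}^2(h_{\nu+1,n})>0$ there separately, though continuity of $\mathbf{\Delta}_{\nu}$ also settles the nonstrict inequality.)

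The two steps you yourself flag as obstacles are genuine gaps, and neither of your proposed devices closes them. For \eqref{turannew}, the bound $\left(\sum_n T_n\right)^2\le\alpha\sum_n T_n^2$ with $\alpha=2\nu+3$ is provably false when $\nu\in\left(-\frac32,-1\right)$: as $x\to h_{\nu+1,1}^-$ one has $T_1\to\infty$ while $T_2,T_3,\dots$ stay bounded, so $\left(\sum_n T_n\right)^2/\sum_n T_n^2\to1>\alpha$. Your remark that everything hinges on the size of $\alpha$ is exactly right and in fact cuts deeper: letting $x\to h_{\nu+1,1}$ in \eqref{turannew} itself yields $\mathbf{H}_{\nu}^2(h_{\nu+1,1})\ge\frac1\alpha\mathbf{H}_{\nu}^2(h_{\nu+1,1})$, which forces $\alpha\ge1$, so no absorption argument can succeed near the endpoint when $\nu<-1$. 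The paper uses none of this machinery: it simply chains \eqref{turanlag} with \eqref{ineqquo} written for $\nu+1$, i.e.\ $x\mathbf{H}_{\nu}(x)<(2\nu+3)\mathbf{H}_{\nu+1}(x)$ on $(0,h_{\nu+1,1})$, whence $\frac1x\mathbf{H}_{\nu}\mathbf{H}_{\nu+1}\ge\frac{1}{2\nu+3}\mathbf{H}_{\nu}^2$ wherever $\mathbf{H}_{\nu}\ge0$ --- which, note, is only guaranteed on $(0,h_{\nu,1})$, since $h_{\nu,1}<h_{\nu+1,1}$ and $\mathbf{H}_{\nu}$ changes sign at $h_{\nu,1}$; your endpoint analysis actually exposes this soft spot in the claimed range for $\nu<-1$.

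For part \textbf{d} your plan stops exactly where the work begins: the reverse moment inequality $I_{\nu-3/2}I_{\nu+1/2}\le\frac{\nu+1/2}{\nu-1/2}I_{\nu-1/2}^2$ is verified only at $x=0$, and ``propagate the bound up to $x=\pi$'' names no mechanism; sharp reverse Cauchy--Schwarz bounds of this kind do not follow from general principles. The idea you are missing --- the one the paper uses --- is monotonicity of the Tur\'an determinant itself: from \eqref{rec1}, \eqref{rec3} and \eqref{rec4},
\[
\mathbf{\Delta}_{\nu}'(x)=\frac2x\mathbf{H}_{\nu-1}(x)\mathbf{H}_{\nu+1}(x)
+\frac{\left(\frac x2\right)^{\nu-1}}{\sqrt\pi\,\Gamma\left(\nu+\frac12\right)}
\left[\frac{x}{2\nu+1}\mathbf{H}_{\nu}(x)-\mathbf{H}_{\nu+1}(x)\right],
\]
and by \eqref{integral} the bracket equals $\frac{2\left(\frac x2\right)^{\nu+1}}{\sqrt\pi\,\Gamma\left(\nu+\frac32\right)}\int_0^1t^2(1-t^2)^{\nu-\frac12}\sin(xt)\,dt\ge0$ for $x\in[0,\pi]$, while $\mathbf{H}_{\nu\pm1}(x)\ge0$ for $\nu\ge\frac32$; hence $\mathbf{\Delta}_{\nu}'\ge0$ on $[0,\pi]$ and, since $\mathbf{\Delta}_{\nu}(0)=0$, the inequality \eqref{turan2} follows and extends to $|x|\le\pi$ by evenness. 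So your Poisson-measure setup is the right stage (it is the paper's part \textbf{e} verbatim), but part \textbf{d} requires this differentiation argument, not a sharp reverse Cauchy--Schwarz estimate.
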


\begin{proof}[\bf Proof]
{\bf a.} Let us consider the notation
$$\mathbf{\Delta}_{\nu}(x)=\mathbf{H}_{\nu}^2(x)-\mathbf{H}_{\nu-1}(x)\mathbf{H}_{\nu+1}(x).
$$
By using the recurrence relation \eqref{rec1} for $\nu$ and $\nu-1,$ and the Mittag-Leffler expansion we obtain that
$$\frac{\mathbf{\Delta}_{\nu-1}(x)}{\mathbf{H}_{\nu}^2(x)}=
\frac{1}{x}\frac{\mathbf{H}_{\nu-1}(x)}{\mathbf{H}_{\nu}(x)}
-\left[\frac{\mathbf{H}_{\nu-1}(x)}{\mathbf{H}_{\nu}(x)}\right]'=
\frac{2(2\nu+1)}{x^2}+\sum_{n\geq 1}\frac{4x^2}{(x^2-h_{\nu,n}^2)^2}\geq0
$$
for all $|\nu|\leq\frac{1}{2}$ and $x>0,$ $x\neq h_{\nu,n},$ $n\in\{1,2,\dots\}.$ Since for each
$n\in\{1,2,\dots\}$ we have
$$\mathbf{\Delta}_{\nu-1}(h_{\nu,n})=\mathbf{H}_{\nu-1}^2(h_{\nu,n})>0,
$$
it follows that $\mathbf{\Delta}_{\nu-1}(x)\geq0$ for $x>0.$ Moreover, the expression $\mathbf{\Delta}_{\nu-1}(x)$
is even in $x,$ and thus the above Tur\'an type inequality is valid for all $x\in\mathbb{R}$ and $|\nu|\leq \frac{1}{2}.$
Now, changing $\nu$ to $\nu+1,$ we obtain that indeed if $\nu\in\left[-\frac{3}{2},-\frac{1}{2}\right]$ and
$x\in\mathbb{R},$ then the Tur\'an type inequality \eqref{turan2} holds true.

{\bf b.} Combining the recurrence relations
\eqref{rec2} and \eqref{rec3} we obtain that
\begin{equation}\label{rec4}
\mathbf{H}_{\nu+1}(x)=\frac{\nu}{x}\mathbf{H}_{\nu}(x)-\mathbf{H}_{\nu}'(x)
+\frac{\left(\frac{x}{2}\right)^{\nu}}{\sqrt{\pi}\Gamma\left(\nu+\frac{3}{2}\right)}.
\end{equation}
Now, combining \eqref{rec1} and \eqref{rec4} it follows that
$$\mathbf{\Delta}_{\nu}(x)=\left(1-\frac{\nu^2}{x^2}\right)\mathbf{H}_{\nu}^2(x)+\left[\mathbf{H}_{\nu}'(x)\right]^2-
\frac{\left(\frac{x}{2}\right)^{\nu}}{\sqrt{\pi}\Gamma\left(\nu+\frac{3}{2}\right)}\mathbf{H}_{\nu-1}(x).
$$
On the other hand, the Struve function is the particular solution of the Struve differential equation \cite[p. 288]{nist}
and consequently we have
$$\mathbf{H}_{\nu}''(x)+\frac{1}{x}\mathbf{H}_{\nu}'(x)+\left(1-\frac{\nu^2}{x^2}\right)\mathbf{H}_{\nu}(x)=
\frac{\left(\frac{x}{2}\right)^{\nu-1}}{\sqrt{\pi}\Gamma\left(\nu+\frac{1}{2}\right)},
$$
which implies that
\begin{equation}\label{delta}
\mathbf{\Delta}_{\nu}(x)=\frac{\left(\frac{x}{2}\right)^{\nu-1}}{\sqrt{\pi}\Gamma\left(\nu+\frac{1}{2}\right)}
\left[\mathbf{H}_{\nu}(x)-\frac{x}{2\nu+1}\mathbf{H}_{\nu-1}(x)\right]-
\frac{1}{x}\mathbf{H}_{\nu}^2(x)\left[\frac{x\mathbf{H}_{\nu}'(x)}{\mathbf{H}_{\nu}(x)}\right]'.
\end{equation}
Differentiating both sides of \eqref{logder} we obtain
$$\left[\frac{x\mathbf{H}_{\nu}'(x)}{\mathbf{H}_{\nu}(x)}\right]'=-\sum_{n\geq1}\frac{4xh_{\nu,n}^2}{(x^2-h_{\nu,n}^2)^2}<0
$$
for all $x>0,$ $x\neq h_{\nu,n},$ $n\in\{1,2,\dots\}$ and $|\nu|\leq\frac{1}{2}.$ Moreover,
by using the Mittag-Leffler expansion \eqref{mittag} we obtain that
\begin{equation}\label{ineqquo}
\frac{x\mathbf{H}_{\nu-1}(x)}{\mathbf{H}_{\nu}(x)}<2\nu+1
\end{equation}
for all $|\nu|\leq \frac{1}{2}$ and $x\in(0,h_{\nu,1}).$ Finally, by using the above inequalities
together with \eqref{delta} we conclude that \eqref{turan2} is valid for $|\nu|\leq \frac{1}{2}$ and $x\in(0,h_{\nu,1}).$
Since $\mathbf{\Delta}_{\nu}(0)=0,$ in \eqref{turan2} we have equality when $x=0.$ Moreover, if $|\nu|\leq1/2,$
then $\mathbf{\Delta}_{\nu}(h_{\nu,1})=-\mathbf{H}_{\nu-1}(h_{\nu,1})\mathbf{H}_{\nu+1}(h_{\nu,1})>0,$
since the smallest positive zero of $\mathbf{H}_{\nu-1}$ is nearer the origin than that of $\mathbf{H}_{\nu},$
and the positive zeros of $\mathbf{H}_{\nu}$ and $\mathbf{H}_{\nu-1}$ separate each other, according
to \cite[p. 373]{steinig}. Now, since the expression $\mathbf{\Delta}_{\nu}(x)$ is even in $x,$ these in
turn imply that \eqref{turan2} holds true for $|x|\leq h_{\nu,1}.$

{\bf c.} By using \eqref{rec1} for $\nu$ and $\nu+1$ we obtain
$$\frac{\mathbf{\Delta}_{\nu}(x)}{\mathbf{H}_{\nu}^2(x)}=\frac{1}{x}\frac{\mathbf{H}_{\nu+1}(x)}{\mathbf{H}_{\nu}(x)}+
\left[\frac{\mathbf{H}_{\nu+1}(x)}{\mathbf{H}_{\nu}(x)}\right]'.
$$
On the other hand, the Mittag-Leffler expansion \eqref{mittag} for $\nu+1$ instead of $\nu$ implies that
$$\left[\frac{\mathbf{H}_{\nu}(x)}{\mathbf{H}_{\nu+1}(x)}\right]'=-\frac{2\nu+3}{x^2}
+\sum_{n\geq1}\frac{-2(x^2+h_{\nu+1,n}^2)}{(x^2-h_{\nu+1,n}^2)^2}\leq0
$$
for all $\nu\in\left[-\frac{3}{2},-\frac{1}{2}\right],$ and $x\neq h_{\nu+1,n},$ $n\in\{1,2,\dots\}.$ This implies
that the Tur\'an type inequality \eqref{turanlag} is valid for $x\in\mathbb{R}$ and
$\nu\in\left[-\frac{3}{2},-\frac{1}{2}\right].$ By using \eqref{ineqquo} for $\nu+1$ instead of $\nu,$
we obtain that \eqref{turannew} holds for $x\in(0,h_{\nu+1,1})$ and $\nu\in\left(-\frac{3}{2},-\frac{1}{2}\right],$
and hence \eqref{turannew} is valid for $|x|< h_{\nu+1,1}$ and $\nu\in\left(-\frac{3}{2},-\frac{1}{2}\right].$

{\bf d.} By using \eqref{rec1}, \eqref{rec3} and \eqref{rec4} we obtain
$$\mathbf{\Delta}'_{\nu}(x)=\frac{2}{x}\mathbf{H}_{\nu-1}(x)\mathbf{H}_{\nu+1}(x)+
\frac{\left(\frac{x}{2}\right)^{\nu-1}}{\sqrt{\pi}\Gamma\left(\nu+\frac{1}{2}\right)}
\left[\frac{x}{2\nu+1}\mathbf{H}_{\nu}(x)-\mathbf{H}_{\nu+1}(x)\right].
$$
In view of the integral representation of $\mathbf{H}_{\nu}$ \cite[p. 292]{nist}
\begin{equation}\label{integral}
\mathbf{H}_{\nu}(x)=\frac{2\left(\frac{x}{2}\right)^{\nu}}{\sqrt{\pi}\Gamma\left(\nu+\frac{1}{2}\right)}
\int_0^1(1-t^2)^{\nu-\frac{1}{2}}\sin(xt)dt,
\end{equation}
we get that
$$\frac{x}{2\nu+1}\mathbf{H}_{\nu}(x)-\mathbf{H}_{\nu+1}(x)
=\frac{2\left(\frac{x}{2}\right)^{\nu+1}}{\sqrt{\pi}\Gamma\left(\nu+\frac{3}{2}\right)}
\int_0^1t^2(1-t^2)^{\nu-\frac{1}{2}}\sin(xt)dt,
$$
which is nonnegative if $x\in[0,\pi].$ On the other hand, it is known \cite[p. 291]{nist} that
$\mathbf{H}_{\nu}(x)\geq0$ if $x>0$ and $\nu\geq \frac{1}{2},$ and combining this with the above result,
we obtain that $\mathbf{\Delta}_{\nu}'(x)\geq0$ when $x\in[0,\pi]$ and $\nu\geq\frac{3}{2}.$ This implies that
$\mathbf{\Delta}_{\nu}(x)\geq 0$ when $x\in[0,\pi]$ and $\nu\geq\frac{3}{2},$ and using again the fact that
$\mathbf{\Delta}_{\nu}(x)$ is even in $x,$ we conclude that \eqref{turan2} holds for $|x|\leq\pi$ and $\nu\geq\frac{3}{2}.$

{\bf e.} We define the function $\mathbb{H}_{\nu}:\mathbb{R}\to\mathbb{R}$ by
$\mathbb{H}_{\nu}(x)=2^{\nu}x^{-\nu}\Gamma\left(\nu+\frac{1}{2}\right)\mathbf{H}_{\nu}(x).$ In view
of \eqref{integral} this function may be represented as
$$\mathbb{H}_{\nu}(x)=\frac{2}{\sqrt{\pi}} \int_0^{1}(1-t^2)^{\nu-\frac{1}{2}}\sin(xt)dt.
$$
Since the above integrand is log-convex in $\nu$ when $x\in(0,\pi)$ and the integral preserves the
log-convexity, it follows that $\nu\mapsto \mathbb{H}_{\nu}(x)$ is log-convex on $\left(\frac{1}{2},\infty\right)$
for $x\in(0,\pi)$ fixed. Here we used tacitly the inequality \cite[p. 291]{nist} $\mathbf{H}_{\nu}(x)>0,$ which
holds for $\nu>\frac{1}{2}$ and $x>0.$ Thus, for all $\nu_{1},\nu_{2}>\frac{1}{2},$ $\alpha\in[0,1]$ and $x>0$ we have
$$\mathbb{H}_{\alpha\nu_{1}+(1-\alpha)\nu_{2}}(x)\leq \left[\mathbb{H}_{\nu_{1}}(x)\right]^{\alpha}
\left[\mathbb{H}_{\nu_{2}}(x)\right]^{1-\alpha}.
$$
Choosing $\nu_{1}=\nu-1,$ $\nu_{2}=\nu+1,$ $\alpha=\frac{1}{2}$, the above inequality reduces to the Tur\'an type inequality
$$\mathbb{H}_{\nu}^2(x)-\mathbb{H}_{\nu-1}(x)\mathbb{H}_{\nu+1}(x)<0,
$$
which is valid for $\nu>\frac{3}{2}$ and $x>0,$ and this is equivalent to the Tur\'an type inequality \eqref{T2}.
\end{proof}

\vskip2mm
\subsection*{\bf Concluding remarks and further results}
\vskip2mm

{\bf A.} We note that the right-hand side of \eqref{turan4} is negative when $\nu>1,$ and thus the
Tur\'an type inequality \eqref{turan4} is interesting only when $\nu<1.$ Now, if one looks at the right-hand
sides of \eqref{turan4} and \eqref{turannew} it is natural to ask what is best constant $\alpha_{\nu}$
depending on $\nu$ and not depending on $x$ for which we have the Tur\'an type inequality
$$\mathbf{H}_{\nu}^2(x)-\mathbf{H}_{\nu-1}(x)\mathbf{H}_{\nu+1}(x)\geq\alpha_{\nu}\mathbf{H}_{\nu}^2(x).
$$
Since close to the origin the Struve function behaves as a simple power, that is, as $x\to 0$ we have
$$\mathbf{H}_{\nu}(x)\sim \frac{x^{\nu+1}}{2^{\nu}\sqrt{\pi}\Gamma\left(\nu+\frac{3}{2}\right)},
$$
it follows that as $x\to0$ we get
$$\frac{\mathbf{\Delta}_{\nu}(x)}{\mathbf{H}_{\nu}^2(x)}\sim \frac{1}{\nu+\frac{3}{2}}.
$$
This implies that $\alpha_{\nu}=1/\left(\nu+\frac{3}{2}\right),$ and then \eqref{turannew} can be improved.
Now, as the argument approaches infinity, the Struve function generally behaves either as a power or as a
damped sinusoid, that is, for $\nu>\frac{1}{2}$ we have
$$\mathbf{H}_{\nu}(x)\sim \frac{x^{\nu-1}}{2^{\nu-1}\sqrt{\pi}\Gamma\left(\nu+\frac{1}{2}\right)},
$$
which implies that as $x\to\infty$ and $\nu>\frac{3}{2}$,
$$\frac{\mathbf{\Delta}_{\nu}(x)}{\mathbf{H}_{\nu}^2(x)}\sim \frac{1}{\nu+\frac{1}{2}}.
$$
This shows that the constant $1/\left(\nu+\frac{1}{2}\right)$ in \eqref{T2} is optimal, and cannot be
improved by using any other constant depending only on $\nu.$ Thus, in this sense the Tur\'an type
inequality \eqref{T2} is sharp.

{\bf B.} It is worth also to mention that by using the formula \cite[p. 292]{nist}
$$\mathbf{H}_{\nu}(x)=\left(\frac{x}{2\pi}\right)^{\frac{1}{2}}
\sum_{n\geq0}\frac{\left(\frac{x}{2}\right)^{n}}{n!\left(n+\frac{1}{2}\right)}J_{n+\nu+\frac{1}{2}}(x),
$$
we obtain that
$$\mathbf{\Delta}_{\nu}(x)=\frac{x}{2\pi}\sum_{n\geq0}
\sum_{m=0}^n\frac{\left(\frac{x}{2}\right)^{n}}{n!(n-m)!\left(m+\frac{1}{2}\right)\left(n-m+\frac{1}{2}\right)}\cdot {}_J\Delta_{\nu}(x),
$$
where
$${}_J\Delta_{\nu}(x)=J_{m+\nu+\frac{1}{2}}(x)J_{n-m+\nu+\frac{1}{2}}(x)-J_{m+\nu-\frac{1}{2}}(x)J_{n-m+\nu+\frac{3}{2}}(x).
$$
Now, by using Nicholson's formula \cite[p. 225]{nist}
$$J_{\mu}(z)J_{\nu}(z)=\frac{2}{\pi}\int_0^{\frac{\pi}{2}}J_{\mu+\nu}(2z\cos\theta)\cos(\mu-\nu)d\theta
$$
we obtain that
\begin{align*}
{}_J\Delta_{\nu}(x)=\frac{2}{\pi}\int_0^{\frac{\pi}{2}}J_{n+2\nu+1}(2x\cos\theta)
\left[\cos(k\theta)-\cos((k+2)\theta)\right]d\theta=\frac{1}{\pi x}\int_0^{2x}J_{n+2\nu+1}(u)\Phi(u)du,
\end{align*}
where
$$\Phi(u)=\Phi(2x\cos\theta)=\frac{\cos(k\theta)-\cos((k+2)\theta)}{\sin\theta}
$$
and $k=n-2m.$ Thus, to verify the Tur\'an type inequality $\mathbf{\Delta}_{\nu}(x)\geq0$ it would be
enough to show the positivity of the integral $\int_0^{2x}J_{n+2\nu+1}(u)\Phi(u)du.$ On the other hand,
according to \cite[Lemma 2.1]{ismail}, if the function $\varphi$ is positive non-increasing and continuous
for $0<t<x$, then for $\nu>-1$ and $x>0$ we have $\int_0^xJ_{\nu}(t)\varphi(t)dt>0.$ Consequently, it would
be enough to show that $\Phi$ is positive and non-increasing. However, we were unable to show this for each
$m\in\{0,1,\dots,n\}$ and $n\in\{0,1,\dots\}.$

{\bf C.} We would like to note also  that the infinite product representation \eqref{product} and the
Mittag-Leffler expansion \eqref{mittag} may be useful also to deduce other inequalities for the Struve
function $\mathbf{H}_{\nu}.$ For example, we can obtain some lower and upper bounds for $\mathbf{H}_{\nu}$
in terms of the Bessel function of the first kind $J_{\nu}.$ According to Steinig \cite[p. 367]{steinig}
for all $n\in\{1,2,\dots\}$ and $|\nu|<\frac{1}{2}$ we have that $j_{\nu,n}<h_{\nu,n}<j_{\nu,n+1},$
where $j_{\nu,n}$ stands for the $n$th positive zero of the Bessel function $J_{\nu}.$ By using these
inequalities we obtain for $|\nu|<\frac{1}{2}$ and $x\in(0,j_{\nu,1})$
$$\prod_{n\geq 1}\left(1-\frac{x^2}{j_{\nu,n}^2}\right)<\prod_{n\geq 1}\left(1-\frac{x^2}{h_{\nu,n}^2}\right)
<\prod_{n\geq 1}\left(1-\frac{x^2}{j_{\nu,n+1}^2}\right),
$$
which in turn implies that
$$\frac{\Gamma(\nu+1)}{\Gamma\left(\nu+\frac{3}{2}\right)}xJ_{\nu}(x)<\mathbf{H}_{\nu}(x)<
\frac{\Gamma(\nu+1)}{\Gamma\left(\nu+\frac{3}{2}\right)}\frac{j_{\nu,1}^2}{j_{\nu,1}^2-x^2}xJ_{\nu}(x).
$$
Here we used the infinite product representation of the Bessel function of the first kind \cite[p. 235]{nist}
$$J_{\nu}(x)=\frac{\left(\frac{x}{2}\right)^{\nu}}{\Gamma(\nu+1)}\prod_{n\geq 1}\left(1-\frac{x^2}{j_{\nu,n}^2}\right).
$$
Another example is an improvement of the inequality \eqref{ineqquo}. Namely, by using again the
inequalities $h_{\nu,n}<j_{\nu,n+1},$ $n\in\{1,2,\dots\},$ we obtain for $x\in(0,h_{\nu,1})$ and
$|\nu|\leq\frac{1}{2}$ the inequality
$$\sum_{n\geq 1}\frac{2}{x^2-h_{\nu,n}^2}<-\sum_{n\geq 1}\frac{2}{h_{\nu,n}^2}<-\sum_{n\geq 1}\frac{2}{j_{\nu,n+1}^2}
=\frac{2}{j_{\nu,1}^2}-\sum_{n\geq 1}\frac{2}{j_{\nu,n}^2}=\frac{2}{j_{\nu,1}^2}-\frac{1}{2(\nu+1)},
$$
which implies that
$$\frac{x\mathbf{H}_{\nu-1}(x)}{\mathbf{H}_{\nu}(x)}<2\nu+1+\left(\frac{2}{j_{\nu,1}^2}-\frac{1}{2(\nu+1)}\right)x^2.
$$
Note that this is indeed an improvement of \eqref{ineqquo} since \cite[eq. 6.7]{muldoon} $j_{\nu,1}^2>4(\nu+1)$ for $\nu>-1.$

{\bf D.} We mention that the Tur\'an type inequality \eqref{turanlag} can be deduced from a
Laguerre type inequality. More precisely, by using \eqref{product} and \eqref{rec1} we obtain that
$$\left[\frac{\mathcal{H}_{\nu}'(x)}{\mathcal{H}_{\nu}(x)}\right]'=\frac{\nu+1}{x^2}
+\left[\frac{\mathbf{H}_{\nu}'(x)}{\mathbf{H}_{\nu}(x)}\right]'=
\frac{2\nu+1}{x^2}+\left[\frac{\mathbf{H}_{\nu-1}(x)}{\mathbf{H}_{\nu}(x)}\right]'
=-\sum_{n\geq1}\frac{2(h_{\nu,n}^2+x^2)}{(h_{\nu,n}^2-x^2)^2},
$$
which implies that the next Laguerre type inequality is valid
\begin{equation}\label{laguerre}
\left[\mathcal{H}_{\nu}'(x)\right]^2-\mathcal{H}_{\nu}(x)\mathcal{H}_{\nu}''(x)\geq0
\end{equation}
for all $|\nu|\leq \frac{1}{2}$ and $x\in\mathbb{R}.$ But, this inequality is equivalent to
$$\mathbf{\Delta}_{\nu-1}(x)\geq \frac{2\nu+1}{x^2}\mathbf{H}_{\nu}^2(x)+\frac{1}{x}\mathbf{H}_{\nu}(x)\mathbf{H}_{\nu-1}(x),
$$
which implies \eqref{turanlag}. Note that \eqref{laguerre} is in fact a particular case of a more general
inequality. For this recall that the real entire function $\phi,$
defined by
$$\phi(z)=\varphi(z;t)=\sum_{n\geq0}b_n(t)\frac{z^n}{n!},
$$
is said to be in the Laguerre-P\'olya class, if $\phi(z)$ can be expressed in the form
$$\phi(z)=cz^de^{-\alpha z^2+\beta z}\prod_{n=1}^{\omega}\left(1-\frac{z}{z_n}\right)e^{\frac{z}{z_n}},\ \ \ 0\leq\omega\leq\infty,
$$
where $c$ and $\beta$ are real, $z_n$'s are real and nonzero for all
$n\in\{1,2,{\dots},\omega\},$ $\alpha\geq0,$ $d$ is a nonnegative
integer and $\sum_{n=1}^{\omega}z_i^{-2}<\infty.$ If $\omega=0,$
then, by convention, the product is defined to be $1.$ Now, recall the following result
(for more details we refer to Skovgaard's paper \cite{skov}): if a real entire function $\phi$ belongs
to the Laguerre-P\'olya class then satisfies the Laguerre
type inequalities
$$\left[\phi^{(m)}(z)\right]^2-\phi^{(m-1)}(z)\phi^{(m+1)}(z)\geq0,
$$
for $m\in\{1,2,\dots\}$ and all admissible values of $z.$ The infinite product representation \eqref{product},
and the result of Steinig \cite{steinig} concerning the fact that the zeros of the Struve function
$\mathbf{H}_{\nu}$ are real when $|\nu|\leq \frac{1}{2},$ show that the function $\mathcal{H}_{\nu}$ belongs
to the Laguerre-P\'olya class, since exponential factors in the product are cancelled due to the symmetry
of the zeros with respect to the origin. This in turn implies that the following Laguerre type inequality
is valid for all $|\nu|\leq \frac{1}{2},$ $x\in\mathbb{R}$ and $m\in\{1,2,\dots\}$
$$\left[\mathcal{H}_{\nu}^{(m)}(x)\right]^2-\mathcal{H}_{\nu}^{(m-1)}(x)\mathcal{H}_{\nu}^{(m+1)}(x)\geq0.
$$
For $m=1$ the above inequality reduces to \eqref{laguerre}.

{\bf E.} Finally, we note that from Hadamard theorem \cite[p. 26]{lev} actually we get that the infinite
product in \eqref{product} is absolutely convergent on compact subsets of the complex plane. This in
turn implies that if we take the purely imaginary number $\mathrm{i}x$ instead of $x$ in \eqref{mittag}, then for
$|\nu|\leq \frac{1}{2}$ and $x\in\mathbb{R}$ we get the new Mittag-Leffler expansion
\begin{equation}\label{mittag2}
\frac{\mathbf{L}_{\nu-1}(x)}{\mathbf{L}_{\nu}(x)}=\frac{2\nu+1}{x}+\sum_{n\geq1}\frac{2x}{x^2+h_{\nu,n}^2},
\end{equation}
where
$$\mathbf{L}_{\nu}(x)=-\mathrm{i}e^{-\frac{1}{2}\pi\mathrm{i}\nu}\mathbf{H}_{\nu}(\mathrm{i}x)=
\left(\frac{x}{2}\right)^{\nu+1}\sum_{n\geq0}\frac{\left(\frac{x}{2}\right)^{2n}}{\Gamma\left(n+\frac{3}{2}\right)
\Gamma\left(n+\nu+\frac{3}{2}\right)}
$$
stands for the modified Struve function of the first kind (see \cite[p. 288]{nist}). Moreover, following
the lines of the proof of part {\bf a} of Theorem \ref{th1} we obtain that
$$\frac{\mathbf{L}_{\nu-1}^2(x)-\mathbf{L}_{\nu-2}(x)\mathbf{L}_{\nu}(x)}{\mathbf{L}_{\nu}^2(x)}=
\frac{1}{x}\frac{\mathbf{L}_{\nu-1}(x)}{\mathbf{L}_{\nu}(x)}
-\left[\frac{\mathbf{L}_{\nu-1}(x)}{\mathbf{L}_{\nu}(x)}\right]'=
\frac{2(2\nu+1)}{x^2}+\sum_{n\geq 1}\frac{4x^2}{(x^2+h_{\nu,n}^2)^2}\geq0
$$
for all $|\nu|\leq\frac{1}{2}$ and $x\neq0.$ Thus, we obtained the Tur\'an type inequality
$$\mathbf{L}_{\nu}^2(x)-\mathbf{L}_{\nu-1}(x)\mathbf{L}_{\nu+1}(x)\geq0,
$$
where $\nu\in\left[-\frac{3}{2},-\frac{1}{2}\right]$ and $x\in\mathbb{R}.$ For $\nu>-\frac{3}{2}$ and
$x>0$ this Tur\'an type inequality was proved also in \cite{bp3,joshi} by using completely different methods.

\section{\bf Tur\'an type inequalities for Struve functions of the second kind}
\setcounter{equation}{0}

In this section we concentrate on the Struve function of the second kind $\mathbf{K}_{\nu},$ which for
$\nu>-\frac{1}{2}$ has the integral representation \cite[p. 292]{nist}
\begin{equation}\label{secondkind}
\mathbf{K}_{\nu}(x)=\mathbf{H}_{\nu}(x)-Y_{\nu}(x)=\frac{2(\frac{x}{2})^\nu}{\sqrt{\pi}\Gamma(\nu+\frac{1}{2})}
\int_0^{\infty}(1+t^2)^{\nu-\frac{1}{2}}e^{-xt}dt.
\end{equation}
Here $Y_{\nu}$ stands for the Bessel function of the second kind of order $\nu.$ Now, consider the
function $\mathcal{K}_{\nu}:\mathbb{R}\to(0,\infty),$ defined by $\mathcal{K}_{\nu}(x)
=2^{\nu}x^{-\nu}\Gamma\left(\nu+\frac{1}{2}\right)\mathbf{K}_{\nu}(x).$ The next result is the
counterpart of the similar results for modified Struve functions of the first and second kind, see \cite{bp2,bp3}.

\begin{theorem}\label{th2}
The following assertions are true:
\begin{enumerate}
\item[\bf a.] The function $x\mapsto \mathcal{K}_{\nu}(x)$ is completely monotonic and log-convex on $(0,\infty)$
for all $\nu>-\frac{1}{2}.$ Moreover, the following inequality is valid for $\nu>-\frac{1}{2}$ and $x>0$
\begin{equation}\label{E1}
\frac{x\mathbf{K}'_{\nu}(x)}{\mathbf{K}_{\nu}(x)}<\nu.
\end{equation}
\item[\bf b.] The function $\nu\mapsto \mathcal{K}_{\nu}(x)$ is completely monotonic and log-convex on
 $(-\frac{1}{2},\infty)$ for all $x>0.$
Moreover, the following Tur\'an type inequality is valid for all $x>0$ and $\nu>\frac{1}{2}$
\begin{equation}\label{T1}
\mathbf{K}_{\nu}^2(x)-\mathbf{K}_{\nu-1}(x)\mathbf{K}_{\nu+1}(x)\leq\frac{1}{\nu+\frac{1}{2}}\mathbf{K}_{\nu}^2(x)
\end{equation}
\item[\bf c.] The function $x\mapsto \mathbf{K}_{\nu}(x)$ is completely monotonic and log-convex on $(0,\infty)$
for all $\nu\in\left(-\frac{1}{2},0\right].$
\item[\bf d.] The function $x\mapsto x\mathbf{K}_{\nu}'(x)/\mathbf{K}_{\nu}(x)$ is increasing on
$(0,\infty)$ for all $\nu>\frac{1}{2}.$ Moreover, the following Tur\'an type inequality holds for $\nu>-\frac{1}{2}$ and $x>0$
\begin{equation}\label{turanK}
\mathbf{K}_{\nu}^2(x)-\mathbf{K}_{\nu-1}(x)\mathbf{K}_{\nu+1}(x)\leq\frac{2}{x}\mathbf{K}_{\nu}(x)\mathbf{K}_{\nu+1}(x).
\end{equation}
\item[\bf e.] For all $x>0$ and $\nu\in\left(-\frac{1}{2},0\right)$ we have
\begin{equation}\label{R1}
\mathcal{K}_{\nu}(x)<\frac{\Gamma(-\nu)}{\Gamma(\frac{1}{2}-\nu)}.
\end{equation}
\item[\bf f.] For all $x,y>0$ and $\nu\in\left(-\frac{1}{2},0\right)$ we have
\begin{equation}\label{R2}
\mathcal{K}_{\nu}(x+y)\geq\frac{\Gamma(\frac{1}{2}-\nu)}{\Gamma(-\nu)}\mathcal{K}_{\nu}(x)\mathcal{K}_{\nu}(y).
\end{equation}
\item[\bf g.] For all $\nu>\frac{3}{2}$ and $x>0$ we have
\begin{equation}\label{R3}
\mathcal{K}_{\nu-1}(x)\mathcal{K}_{\nu+1}(x)<\mathcal{K}_{\frac{1}{2}}(x)\mathcal{K}_{2\nu-\frac{1}{2}}(x)
\end{equation}
and the reverse inequality holds when $\nu\in\left(\frac{1}{2},\frac{3}{2}\right)$ and $x>0$.
\end{enumerate}
\end{theorem}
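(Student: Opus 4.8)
The plan is to reduce the entire theorem to the single integral representation obtained by inserting \eqref{secondkind} into the definition of $\mathcal{K}_{\nu}$; a direct computation cancels all prefactors and gives
$$\mathcal{K}_{\nu}(x)=\frac{2}{\sqrt{\pi}}\int_0^{\infty}(1+t^2)^{\nu-\frac12}e^{-xt}\,dt,$$
valid for $\nu>-\frac12$ and $x>0$. From this, parts \textbf{a}, \textbf{b}, \textbf{c}, \textbf{e} follow from standard Laplace-transform facts. Since $\mathcal{K}_{\nu}$ is the Laplace transform of the positive function $t\mapsto\frac{2}{\sqrt{\pi}}(1+t^2)^{\nu-\frac12}$, differentiating under the integral sign shows it is completely monotonic in $x$, hence log-convex, which settles \textbf{a}; the bound \eqref{E1} then follows from $x\mathbf{K}'_{\nu}/\mathbf{K}_{\nu}=\nu+x\mathcal{K}'_{\nu}/\mathcal{K}_{\nu}$ together with $\mathcal{K}'_{\nu}<0$. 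For \textbf{b} I would write the integrand as $e^{(\nu-\frac12)\log(1+t^2)}e^{-xt}$, which is log-linear, hence log-convex, in $\nu$; since integration preserves log-convexity (Hölder), $\nu\mapsto\mathcal{K}_{\nu}(x)$ is log-convex, and the same representation yields the stated monotonicity in $\nu$. Part \textbf{c} follows because $\mathbf{K}_{\nu}(x)=x^{\nu}\mathcal{K}_{\nu}(x)/(2^{\nu}\Gamma(\nu+\frac12))$, and for $\nu\in(-\frac12,0]$ both $x^{\nu}$ and $\mathcal{K}_{\nu}$ are completely monotonic, so their product is — this is exactly where $\nu\leq 0$ enters. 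For \textbf{e}, monotonicity from \textbf{a} gives $\mathcal{K}_{\nu}(x)<\mathcal{K}_{\nu}(0^{+})$, and the limiting value is evaluated from the Beta integral $\int_0^{\infty}(1+t^2)^{\nu-\frac12}dt=\frac{\sqrt{\pi}\,\Gamma(-\nu)}{2\Gamma(\frac12-\nu)}$, which converges precisely for $\nu<0$.

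For the Turán inequalities I would convert the analytic content into two monotonicity statements. Log-convexity in $\nu$ yields $\mathcal{K}_{\nu}^2\leq\mathcal{K}_{\nu-1}\mathcal{K}_{\nu+1}$, and substituting $\mathcal{K}_{\mu}=2^{\mu}x^{-\mu}\Gamma(\mu+\frac12)\mathbf{K}_{\mu}$ together with the gamma identity $\Gamma(\nu-\frac12)\Gamma(\nu+\frac32)=\frac{\nu+1/2}{\nu-1/2}\Gamma(\nu+\frac12)^2$ turns this into \eqref{T1}. For \eqref{turanK}, the recurrences for $\mathbf{K}_{\nu}$ have the same form as \eqref{rec1} and \eqref{rec4} (the homogeneous recurrences annihilate $Y_{\nu}$), so setting $\mathbf{\Delta}^{K}_{\nu}(x)=\mathbf{K}_{\nu}^2(x)-\mathbf{K}_{\nu-1}(x)\mathbf{K}_{\nu+1}(x)$ one obtains, exactly as in part \textbf{c} of Theorem \ref{th1}, the identity $\mathbf{\Delta}^{K}_{\nu}/\mathbf{K}_{\nu}^2=[\mathbf{K}_{\nu+1}/\mathbf{K}_{\nu}]'+\frac1x\,\mathbf{K}_{\nu+1}/\mathbf{K}_{\nu}$. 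Thus \eqref{turanK} is equivalent to $[\mathbf{K}_{\nu+1}/\mathbf{K}_{\nu}]'\leq\frac1x\,\mathbf{K}_{\nu+1}/\mathbf{K}_{\nu}$, and since $\mathbf{K}_{\nu+1}/\mathbf{K}_{\nu}=\frac{x}{2\nu+1}\,\mathcal{K}_{\nu+1}/\mathcal{K}_{\nu}$, this reduces to $x\mapsto\mathcal{K}_{\nu+1}(x)/\mathcal{K}_{\nu}(x)$ being decreasing. That is clear from $\mathcal{K}_{\nu+1}/\mathcal{K}_{\nu}=E_{\mu_x}[1+t^2]$, where $\mu_x(dt)\propto(1+t^2)^{\nu-\frac12}e^{-xt}\,dt$, because $\frac{d}{dx}E_{\mu_x}[1+t^2]=-\mathrm{Cov}_{\mu_x}(1+t^2,t)\leq 0$, the covariance of the two increasing functions $1+t^2$ and $t$ being nonnegative.

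Parts \textbf{f} and \textbf{g} are consequences of the representation and of log-convexity. For \textbf{f} I would use the symmetrization identity
$$\mathcal{K}_{\nu}(x+y)\mathcal{K}_{\nu}(0)-\mathcal{K}_{\nu}(x)\mathcal{K}_{\nu}(y)=\frac{2}{\pi}\int_0^{\infty}\!\!\int_0^{\infty}(e^{-xs}-e^{-xt})(e^{-ys}-e^{-yt})(1+s^2)^{\nu-\frac12}(1+t^2)^{\nu-\frac12}\,ds\,dt\geq 0,$$
the integrand being nonnegative because the two factors $e^{-xs}-e^{-xt}$ and $e^{-ys}-e^{-yt}$ always share the same sign; since $\mathcal{K}_{\nu}(0)=\Gamma(-\nu)/\Gamma(\frac12-\nu)$ by \textbf{e}, this is precisely \eqref{R2}. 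For \textbf{g}, observe that the index pairs $\{\nu-1,\nu+1\}$ and $\{\frac12,\,2\nu-\frac12\}$ both sum to $2\nu$; writing $\phi(a)=\log\mathcal{K}_a(x)$, which is strictly convex by \textbf{b}, the quantity $\phi(\nu-d)+\phi(\nu+d)$ is strictly increasing in the half-spread $d\geq 0$, so comparing $d=1$ with $d=\nu-\frac12$ yields \eqref{R3} when $\nu>\frac32$ and the reverse when $\nu\in(\frac12,\frac32)$.

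The main obstacle is part \textbf{d}, the monotonicity of $x\mapsto x\mathbf{K}'_{\nu}(x)/\mathbf{K}_{\nu}(x)$ for $\nu>\frac12$: the naive computation leads to the opaque condition $x\,\mathrm{Var}_{\mu_x}(t)\geq E_{\mu_x}[t]$. The key device I would introduce is the Stein-type identity obtained by integrating $\int_0^{\infty}\frac{d}{dt}\big(t(1+t^2)^{\nu-\frac12}e^{-xt}\big)\,dt=0$ by parts, namely $xE_{\mu_x}[t]=1+(2\nu-1)E_{\mu_x}[t^2/(1+t^2)]$, which rewrites the logarithmic derivative cleanly as
$$\frac{x\mathbf{K}'_{\nu}(x)}{\mathbf{K}_{\nu}(x)}=\nu-1-(2\nu-1)\,E_{\mu_x}\!\left[\frac{t^2}{1+t^2}\right].$$
Because $t^2/(1+t^2)$ is increasing in $t$, the same covariance argument as above shows $x\mapsto E_{\mu_x}[t^2/(1+t^2)]$ is decreasing, so for $2\nu-1>0$ the right-hand side is increasing, which is the claim; the boundary case $\nu=\frac12$ makes $\mu_x$ exponential and the expression constant, consistent with strictness only for $\nu>\frac12$. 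Throughout, the only genuine technical points are the justification of differentiation under the integral sign and the nonnegativity of covariances of similarly ordered functions, both routine here.
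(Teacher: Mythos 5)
Your proposal is correct, and on parts \textbf{a}, \textbf{c}, \textbf{e} and the Tur\'an inequality \eqref{T1} it coincides with the paper's own argument: the Laplace representation \eqref{integK}, differentiation under the integral sign for complete monotonicity (hence log-convexity), the Beta-integral evaluation of $\mathcal{K}_{\nu}(0^{+})$, and the Gamma-quotient translation of log-convexity in $\nu$ into \eqref{T1} are all exactly the paper's steps (the paper even mentions your H\"older route as an alternative). The genuine differences are in \textbf{d}, \textbf{f} and \textbf{g}. For \textbf{d}, the paper uses \eqref{rec1K} to write $x\mathbf{K}_{\nu}'(x)/\mathbf{K}_{\nu}(x)=(2\nu-1)\,{\int_0^\infty(1+t^2)^{\nu-\frac32}e^{-xt}dt}\big/{\int_0^\infty(1+t^2)^{\nu-\frac12}e^{-xt}dt}-\nu$ and shows the ratio is increasing by an explicit symmetrized double integral with kernel $(t-s)^2(t+s)$; your Stein identity is the same fact in probabilistic dress, since $E_{\mu_x}[t^2/(1+t^2)]=1-\mathcal{K}_{\nu-1}(x)/\mathcal{K}_{\nu}(x)$, and your covariance inequality is proved by precisely that symmetrization. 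Your reduction of \eqref{turanK} to the decrease of $x\mapsto\mathcal{K}_{\nu+1}(x)/\mathcal{K}_{\nu}(x)$ is the paper's own ``alternative'' remark $\left[\mathbf{K}_{\nu+1}(x)/(x\mathbf{K}_{\nu}(x))\right]'\leq0$, and it cleanly yields \eqref{turanK} on the full range $\nu>-\frac12$ (the paper's main route proves the case $\nu>\frac12$ and then shifts $\nu\mapsto\nu+1$). For \textbf{f}, the paper invokes Kimberling's theorem (a continuous completely monotonic map of $(0,\infty)$ into $(0,1)$ has super-additive logarithm); your two-point correlation identity proves the needed super-additivity directly --- I checked the normalization, and with the prefactor $\frac{2}{\pi}$ your identity is exact --- so your proof is self-contained where the paper's rests on an external citation. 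For \textbf{g}, the paper applies the Chebyshev integral inequality \eqref{chebyshev} with $p(t)=e^{-xt}$, $f(t)=\frac{2}{\sqrt{\pi}}(1+t^2)^{\nu-\frac32}$, $g(t)=\frac{2}{\sqrt{\pi}}(1+t^2)^{\nu+\frac12}$ (the threshold $\nu=\frac32$ being where $f$ changes monotonicity), whereas you derive \eqref{R3} from strict log-convexity of $\nu\mapsto\mathcal{K}_{\nu}(x)$ by comparing the half-spreads $d=1$ and $d=\nu-\frac12$ of the two index pairs with common mean $2\nu$; this is valid (all four indices stay in $(-\frac12,\infty)$, and strictness follows from a non-degenerate Cauchy--Schwarz in the $\nu$-derivatives), reproduces the same threshold $\nu=\frac32$, and has the merit of deducing \textbf{g} from \textbf{b} rather than from a separate integral inequality.

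One small flag: your sentence ``the same representation yields the stated monotonicity in $\nu$'' glosses a point on which the paper itself is shaky. Since $\partial\mathcal{K}_{\nu}(x)/\partial\nu=\frac{2}{\sqrt{\pi}}\int_0^\infty\log(1+t^2)(1+t^2)^{\nu-\frac12}e^{-xt}dt>0$, the function $\nu\mapsto\mathcal{K}_{\nu}(x)$ is increasing, with all $\nu$-derivatives nonnegative (absolute, not complete, monotonicity); the paper's displayed formula with $\left(\log\frac{1}{1+t^2}\right)^m$ is negative for odd $m$, so the literal ``completely monotonic in $\nu$'' claim cannot be established this way in either write-up. This does not affect anything downstream, since \eqref{T1} and your part \textbf{g} use only the log-convexity in $\nu$, which is sound.
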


\begin{proof}[\bf Proof]
{\bf a.} \& {\bf b.} By using \eqref{secondkind} we obtain that for $\nu>-\frac{1}{2}$ the next integral
representation is valid
\begin{equation}\label{integK}
\mathcal{K}_{\nu}(x)=2^{\nu}x^{-\nu}\Gamma\left(\nu+\frac{1}{2}\right)\mathbf{K}_{\nu}(x)=\frac{2}{\sqrt{\pi}}
\int_0^{\infty}(1+t^2)^{\nu-\frac{1}{2}}e^{-xt}dt.
\end{equation}
Consequently, for $n,m\in \{0,1,2,\ldots \}$ and $\nu>-\frac{1}{2}$ we have
$$(-1)^n[\mathcal{K}_{\nu}(x)]^{(n)}=\frac{2}{\sqrt{\pi}}
\int_0^{\infty}t^n(1+t^2)^{\nu-\frac{1}{2}}e^{-xt}dt,
$$
and
$$(-1)^m\frac{\partial^m\mathcal{K}_{\nu}(x)}{\partial \nu^m}=\frac{2}{\sqrt{\pi}}
\int_0^{\infty}\left(\log\frac{1}{1+t^2}\right)^m(1+t^2)^{\nu-\frac{1}{2}}e^{-xt}dt.
$$
Therefore the functions $x\mapsto \mathcal{K}_{\nu}(x)$ and $\nu\mapsto \mathcal{K}_{\nu}(x)$ are completely
monotonic and hence are
log-convex, since every completely monotonic function is log convex (see \cite[p. 167]{widder}).
Alternatively, the log-convexity of these functions can be proved also by using the H\"older-Rogers inequality
for integrals. Complete monotonicity and log-convexity of $\mathcal{K}_{\nu}$ can be concluded also by noticing
that this function is in fact a Laplace transform.

Now, to prove the Tur\'an type inequality \eqref{T1}, note that $\nu\mapsto \mathcal{K}_{\nu}(x)$ is
log convex on $(-\frac{1}{2},\infty)$ for all $x>0$, which implies that for all $\nu_{1},\nu_{2}>-\frac{1}{2},$
$\alpha\in[0,1]$ and $x>0$ we have
$$\mathcal{K}_{\alpha\nu_{1}+(1-\alpha)\nu_{2}}(x)\leq \left[\mathcal{K}_{\nu_{1}}(x)\right]^{\alpha}
\left[\mathcal{K}_{\nu_{2}}(x)\right]^{1-\alpha}.
$$
Choosing $\nu_{1}=\nu-1,$ $\nu_{2}=\nu+1,$ and $\alpha=\frac{1}{2}$, the above inequality reduces to the Tur\'an type inequality
$$\mathcal{K}_{\nu}^2(x)-\mathcal{K}_{\nu-1}(x)\mathcal{K}_{\nu+1}(x)\leq0,
$$
which is equivalent to the inequality \eqref{T1}.

Alternatively, \eqref{T1} can be proved also as follows. For this let us consider the notation
$$\Theta_{\nu}(x)=\mathbf{K}_{\nu}^2(x)-\mathbf{K}_{\nu-1}(x)\mathbf{K}_{\nu+1}(x).
$$
By using the integral representation \eqref{secondkind} we get
\begin{align*}
\Theta_{\nu}(x)&=\frac{4}{\pi}\left(\frac{x}{2}\right)^{2\nu}
\int_0^{\infty}\int_0^{\infty}e^{-x(t+s)}(1+t^2)^{\nu-\frac{1}{2}}(1+s^2)^{\nu-\frac{3}{2}}
\left[\frac{1+s^2}{\Gamma^2\left(\nu+\frac{1}{2}\right)}-\frac{1+t^2}{\Gamma\left(\nu-\frac{1}{2}\right)\Gamma\left(\nu+\frac{3}{2}\right)}\right]dtds\\
&=\frac{4}{\pi}\left(\frac{x}{2}\right)^{2\nu}\int_0^{\infty}\int_0^{\infty}e^{-x(t+s)}(1+s^2)^{\nu-\frac{1}{2}}(1+t^2)^{\nu-\frac{3}{2}}
\left[\frac{1+t^2}{\Gamma^2\left(\nu+\frac{1}{2}\right)}-\frac{1+s^2}{\Gamma\left(\nu-\frac{1}{2}\right)\Gamma\left(\nu+\frac{3}{2}\right)}\right]dtds\\
&=\frac{2}{\pi}\left(\frac{x}{2}\right)^{2\nu}\int_0^{\infty}\int_0^{\infty}e^{-x(t+s)}(1+t^2)^{\nu-\frac{3}{2}}(1+s^2)^{\nu-\frac{3}{2}}\cdot \mathbf{E}_{\nu}(t,s)dtds,
\end{align*}
where
\begin{align*}\mathbf{E}_{\nu}(t,s)&=(1+t^2)\left[\frac{1+s^2}{\Gamma^2\left(\nu+\frac{1}{2}\right)}-\frac{1+t^2}
{\Gamma\left(\nu-\frac{1}{2}\right)\Gamma\left(\nu+\frac{3}{2}\right)}\right]+(1+s^2)\left[\frac{1+t^2}{\Gamma^2\left(\nu+\frac{1}{2}\right)}-\frac{1+s^2}
{\Gamma\left(\nu-\frac{1}{2}\right)\Gamma\left(\nu+\frac{3}{2}\right)}\right]\\
&=\frac{2(1+t^2)(1+s^2)}{\Gamma^2\left(\nu+\frac{1}{2}\right)}-\frac{(1+t^2)^2}
{\Gamma\left(\nu-\frac{1}{2}\right)\Gamma\left(\nu+\frac{3}{2}\right)}-\frac{(1+s^2)^2}
{\Gamma\left(\nu-\frac{1}{2}\right)\Gamma\left(\nu+\frac{3}{2}\right)}\\
&=\frac{1}{\Gamma\left(\nu+\frac{1}{2}\right)\Gamma\left(\nu+\frac{3}{2}\right)}\mathbf{d}_{\nu}(t,s)\end{align*}
and
$$\mathbf{d}_{\nu}(t,s)=2\left(\nu+\frac{1}{2}\right)(1+t^2)(1+s^2)
-\left(\nu-\frac{1}{2}\right)(1+t^2)^2-\left(\nu-\frac{1}{2}\right)(1+s^2)^2.
$$
Since
$$\mathbf{d}_{\nu}(t,s)\leq2(1+t^2)(1+s^2) \ \ \Longleftrightarrow \ \ -\left(\nu-\frac{1}{2}\right)(t^2-s^2)^2\leq0
$$
for all $t,s\geq0$ and $\nu>\frac{1}{2},$ it follows that
$$\Theta_{\nu}(x)\leq\frac{4}{\pi}\left(\frac{x}{2}\right)^{2\nu}
\frac{1}{\Gamma\left(\nu+\frac{1}{2}\right)\Gamma\left(\nu+\frac{3}{2}\right)}
\int_0^{\infty}\int_0^{\infty}e^{-x(t+s)}(1+t^2)^{\nu-\frac{1}{2}}(1+s^2)^{\nu-\frac{1}{2}}dtds,
$$
which is equivalent to \eqref{T1}.

Now, we prove the inequality \eqref{E1}. Since for $\nu>-\frac{1}{2}$ the function
$x\mapsto x^{-\nu}\mathbf{K}_{\nu}(x)$ is completely monotonic on $(0,\infty),$ in particular it is also decreasing.
Consequently, the function $x\mapsto \log \left(x^{-\nu}\mathbf{K}_{\nu}(x)\right)$ is also decreasing on $(0,\infty)$
for $\nu>-\frac{1}{2}$ and hence $\left(\log \left(x^{-\nu}\mathbf{K}_{\nu}(x)\right)\right)'<0,$
which in turn implies \eqref{E1}.

{\bf c.} By definition of $\mathcal{K}_{\nu}(x)$ we have
$$\mathbf{K}_{\nu}(x)=\frac{x^{\nu}\mathcal{K}_{\nu}(x)}{2^{\nu}\Gamma\left(\nu+\frac{1}{2}\right)}.
$$
Note that $x\mapsto x^{\nu}$ is completely monotonic on $(0,\infty)$ for all $\nu\leq0$. Thus by part {\bf a} of
this theorem the function $x\mapsto \mathbf{K}_{\nu}(x)$, as a product of two completely monotonic functions, is
completely monotonic and hence log-convex on $(0,\infty)$ for all $\nu\in\left(-\frac{1}{2},0\right].$

{\bf d.} By using $\mathbf{K}_{\nu}(x)=\mathbf{H}_{\nu}(x)-Y_{\nu}(x)$ and the corresponding recurrence
relations for $\mathbf{H}_{\nu}$ and $Y_{\nu}$ we can see that the Struve function of the second kind $\mathbf{K}_{\nu}$
satisfies the same recurrence relations like the Struve function of the first kind $\mathbf{H}_{\nu},$
that is, if we replace $\mathbf{H}_{\nu}$ with $\mathbf{K}_{\nu}$ in \eqref{rec1}, \eqref{rec2},
\eqref{rec3} and \eqref{rec4}, then these recurrence relations remain true. The analogous of \eqref{rec1} is the following
\begin{equation}\label{rec1K}
\mathbf{K}_{\nu-1}(x)=\frac{\nu}{x}\mathbf{K}_{\nu}(x)+\mathbf{K}_{\nu}'(x).
\end{equation}
Now, by using \eqref{integK} and \eqref{rec1K} we obtain
\begin{align*}
&\left[\frac{x\mathbf{K}_{\nu}'(x)}{\mathbf{K}_{\nu}(x)}\right]'=\left[\frac{x\mathbf{K}_{\nu-1}(x)}{\mathbf{K}_{\nu}(x)}\right]'=
(2\nu-1)\left[\left.{\int_0^{\infty}(1+t^2)^{\nu-\frac{3}{2}}e^{-xt}dt}\right/{\int_0^{\infty}(1+t^2)^{\nu-\frac{1}{2}}e^{-xt}dt}\right]'\\
&\ \ \ =(2\nu-1)\left.{\int_0^{\infty}\int_0^{\infty}(1+t^2)^{\nu-\frac{3}{2}}(1+s^2)^{\nu-\frac{1}{2}}e^{-x(t+s)}(s-t)dtds}\right/{\left(\int_0^{\infty}(1+t^2)^{\nu-\frac{1}{2}}e^{-xt}dt\right)^2}\\
&\ \ \ =(2\nu-1)\left.{\int_0^{\infty}\int_0^{\infty}(1+s^2)^{\nu-\frac{3}{2}}(1+t^2)^{\nu-\frac{1}{2}}e^{-x(t+s)}(t-s)dtds}\right/{\left(\int_0^{\infty}(1+t^2)^{\nu-\frac{1}{2}}e^{-xt}dt\right)^2}\\
&\ \ \ =\left(\nu-\frac{1}{2}\right)\left.{\int_0^{\infty}\int_0^{\infty}((1+t^2)(1+s^2))^{\nu-\frac{3}{2}}e^{-x(t+s)}(t-s)^2(t+s)dtds}
\right/{\left(\int_0^{\infty}(1+t^2)^{\nu-\frac{1}{2}}e^{-xt}dt\right)^2}.
\end{align*}
Thus, indeed the function $x\mapsto x\mathbf{K}_{\nu}'(x)/\mathbf{K}_{\nu}(x)$ is increasing on $(0,\infty)$ for all $\nu>\frac{1}{2}.$
Now, appealing to the above result and to the recurrence relation \eqref{rec1K} we obtain for $x>0$ and $\nu>\frac{1}{2}$
$$0\leq\left[\frac{x\mathbf{K}_{\nu-1}(x)}{\mathbf{K}_{\nu}(x)}\right]'
=2\frac{\mathbf{K}_{\nu-1}(x)}{\mathbf{K}_{\nu}(x)}-\frac{x\Theta_{\nu-1}(x)}{\mathbf{K}_{\nu-1}(x)}.
$$
Changing $\nu$ with $\nu+1$ we get \eqref{turanK}. Alternatively, \eqref{turanK} can be proved by using
$$0\geq\left[\frac{\mathbf{K}_{\nu+1}(x)}{x\mathbf{K}_{\nu}(x)}\right]'=
\frac{\Theta_{\nu}(x)}{x\mathbf{K}_{\nu}^2(x)}-\frac{2}{x^2}\frac{\mathbf{K}_{\nu+1}(x)}{\mathbf{K}_{\nu}(x)},
$$
where $x>0$ and $\nu>-\frac{1}{2}.$

{\bf e.} By part {\bf a} of this theorem, the function $\mathcal{K}_{\nu}$ is decreasing on $(0,\infty)$
for all $\nu>-\frac{1}{2},$ and hence we get
$$\mathcal{K}_{\nu}(x)<\frac{2}{\sqrt{\pi}}\int_0^{\infty}(1+t^2)^{\nu-\frac{1}{2}}dt.
$$
Now, by using \cite[p. 142]{nist}
$$B(a,b)=\frac{\Gamma(a)\Gamma(b)}{\Gamma(a+b)}=\int_0^{\infty}\frac{t^{a-1}dt}{(1+t)^{a+b}}
=\int_0^{\infty}\frac{2u^{2a-1}du}{(1+u^2)^{a+b}}
$$
it can be shown that for $\nu<0$ we have
$$\frac{2}{\sqrt{\pi}}\int_0^{\infty}(1+t^2)^{\nu-\frac{1}{2}}dt=\frac{\Gamma(-\nu)}{\Gamma(\frac{1}{2}-\nu)}.
$$
Consequently, for all $x>0$ and $\nu\in\left(-\frac{1}{2},0\right)$ we obtain the inequality \eqref{R1}.

{\bf f.} From inequality \eqref{R1} and part {\bf a} of this theorem we have that the function
$$x\mapsto \frac{\Gamma(\frac{1}{2}-\nu)}{\Gamma(-\nu)}\mathcal{K}_{\nu}(x)
$$
maps $(0,\infty)$ into $(0,1)$ and it is completely monotonic
on $(0,\infty)$ for all $\nu\in\left(-\frac{1}{2},0\right)$.
Now, recall the result of Kimberling \cite{kimberling}, which says that if a function $f$, defined on
$(0,\infty)$, is continuous and completely monotonic and maps $(0,\infty)$ into $(0,1)$, then $\log f$
is super-additive, that is, for all
$x,y>0$ we have
$$ \log f(x+y)\geq\log f(x)+\log f(y)
$$
or
$$f(x+y)\geq f(x)f(y).
$$
Applying this result the inequality \eqref{R2} follows.

{\bf g.} To prove the inequality \eqref{R3} we use the Chebyshev integral inequality \cite[p. 40]{mitrinovic}:
Let $f$ and $g$ be functions which are integrable and monotone in the same sense (i.e.
either both increasing or both decreasing) on $(a,b)$ and let $p$ be a positive and integrable
function on $(a,b)$. Then
\begin{equation}\label{chebyshev}
\int_a^{b}p(t)f(t)dt\int_a^{b}p(t)g(t)dt\leq\int_a^{b}p(t)dt\int_a^{b}p(t)f(t)g(t)dt.
\end{equation}
If $f$ and $g$ are monotone in opposite sense (i.e. one is decreasing and the other is increasing), then
the inequality \eqref{chebyshev} is reversed.

Now let $p,f$ and $g$ be functions defined on $(0,\infty)$ such that
$$p(t)=e^{-xt}, ~  f(t)=\frac{2}{\sqrt{\pi}}(1+t^2)^{\nu-\frac{3}{2}}~ \mbox{ and }~
g(t)=\frac{2}{\sqrt{\pi}}(1+t^2)^{\nu+\frac{1}{2}}.
$$
Note that $f$ is increasing if $\nu>\frac{3}{2},$ and $g$ is increasing if $\nu>-\frac{1}{2}.$
Now substituting $p,f$ and $g$ in \eqref{chebyshev} and using the following
$$\mathbf{H}_{\frac{1}{2}}(x)=\sqrt{\frac{2}{\pi x}}(1-\cos x), ~
Y_{\frac{1}{2}}(x)=-\sqrt{\frac{2}{\pi x}}\cos x
$$
and the definition of $\mathcal{K}_{\nu}(x)$ we have the desired inequality  \eqref{R3}
valid for all $\nu>\frac{3}{2}$ and $x>0.$ As $f$ is decreasing for $\nu<\frac{3}{2},$
the inequality \eqref{R3} is reversed when $\nu\in\left(\frac{1}{2},\frac{3}{2}\right).$
\end{proof}

\subsection*{Acknowledgments} The work of \'A. Baricz was supported by the J\'anos Bolyai Research Scholarship of
the Hungarian Academy of Sciences. The second author is on leave from the Department of Mathematics,
Indian Institute of Technology Madras, Chennai-600 036, India. The research of S. Singh was supported by the
fellowship of the University Grants Commission, India.

\end{document}